\def\figurename{Figure} 
\renewcommand{\fnum@figure}[1]{\figurename~\thefigure.}
\def\tablename{Table} 
\renewcommand{\fnum@table}[1]{\tablename~\thetable.}
\newtheorem{theorem}{Theorem}[section]
\newtheorem{lemma}[theorem]{Lemma}
\theoremstyle{definition}
\newtheorem{definition}[theorem]{Definition}
\theoremstyle{remark}
\newtheorem{remark}[theorem]{Remark}
\numberwithin{equation}{section}
\begin{document}

\title{Mean-field backward stochastic differential equations on Markov chains\thanks{The work of Wen Lu is supported
partially by the National Natural Science Foundation of China
(61273128) and a Project of Shandong  Province Higher Educational
Science and Technology Program (J13LI06). The work of Yong Ren is
supported by the National Natural Science Foundation of
 China (10901003),
 the Distinguished Young Scholars of Anhui Province (1108085J08),
 the Key
 Project of Chinese Ministry of Education (211077) and the Anhui Provincial Natural
 Science Foundation (10040606Q30).  }}

\author{Wen Lu$^1$\footnote{e-mail: llcxw@163.com}\;\  \ \ Yong
Ren$^2$\footnote{Corresponding author. e-mail: brightry@hotmail.com
and renyong@126.com}
\\
 \small  1. School of Mathematics and Informational Science, \\ \small Yantai University, Yantai 264005, China   \\
 \small 2. Department of Mathematics, \\
 \small Anhui Normal University, Wuhu
241000, China}
\date{}
\maketitle
\begin{abstract}
 In this paper, we deal with a class of mean-field backward stochastic differential equations (BSDEs) related to finite state,
 continuous time Markov chains. We obtain the existence and uniqueness theorem and a comparison theorem
for  solutions of one-dimensional mean-field BSDEs under  Lipschitz
condition.
\end{abstract}

\vspace{.08in} \noindent \textbf{Keywords}   Mean-field backward
stochastic differential equations;   Markov chain; comparison
theorem

\vspace{.08in} \noindent \textbf{Mathematics Subject Classification}
 60H20; 60H10
\section{Introduction}

The general (nonlinear) backward stochastic differential equations
(BSDE in short) were firstly introduced by Pardoux and Peng
\cite{PP} in 1990. Since then,  BSDEs have been studied with great
interest, and they have gradually become an important mathematical
tool in many fields such as financial mathematics, stochastic games
and optimal control, etc, see for example, Peng \cite{Peng},
Hamad\`{e}ne and Lepeltier \cite{HL95} and El Karoui et al.
\cite{EIK}.

\emph{McKean-Vlasov} stochastic differential equation of the form
\begin{eqnarray} \label{bsde:1}
  dX(t)=b(X(t),\mu(t))dt+dW(t),\quad t\in[0, T],\quad X(0)=x,
\end{eqnarray}
where
$$b(X(t),\mu(t))=\int_{\Omega}b(X(t,\omega), X(t;\omega'))P(d\omega')=E[b(\xi, X(t)]|_{\xi=X(t)},$$
$b: R^m\times R^m\rightarrow R$ being a (locally) bounded Borel
measurable function and $\mu(t; \cdot)$ being the probability
distribution of the unknown process $X(t)$, was suggested by Kac
\cite{Kac}  as a stochastic toy model for the Vlasov kinetic
equation of plasma and the study of which was initiated by Mckean
\cite{McKean}. Since then, many authors made contributions on
McKean-Vlasov type SDEs and applications, see for example, Ahmed
\cite{Ahmed}, Ahmed and Ding \cite{AhmedDing}, Borkar and Kumar
\cite{Borkar}, Chan \cite{Chan}, Crisan and Xiong
\cite{CrisanXiong}, Kotelenez \cite{Kotelenez}, Kotelenez and Kurtz
\cite{KotelenezKurtz}, and so on.

Mathematical mean-field approaches have been used in many fields,
not only in physics and Chemistry, but also recently in economics,
finance and game theory, see for example, Lasry and Lions
\cite{LasryLions}, they have studied mean-field limits for problems
in economics and finance, and also for the theory of stochastic
differential games.

Inspired by Lasry and Lions \cite{LasryLions}, Buckdahn et al.
\cite{Buckdahn1} introduced a new kind of BSDEs-mean-field BSDEs.
Furthermore, Buckdahn et al. \cite{Buckdahn2} deepened the
investigation of mean-field BSDEs in a rather general setting, they
gave the existence and uniqueness of solutions for mean-field BSDEs
with Lipschitz condition on coefficients, they also established the
comparison principle for these mean-field BSDEs. On the other hand,
since the works \cite{Buckdahn1} and \cite{Buckdahn2} on the
mean-field BSDEs, there are some efforts devote to its
generalization,  Xu \cite{Xu} obtained the existence and uniqueness
of solutions for mean-field backward doubly stochastic differential
equations; Li and Luo \cite{LiLuo} studied reflected BSDEs of
mean-field type, they proved the existence and the uniqueness for
reflected mean-field BSDEs; Li \cite{LiJuan} studied reflected
mean-filed BSDEs in a purely probabilistic method, and gave a
probabilistic interpretation of the nonlinear and nonlocal PDEs with
the obstacles.

However, most previous contributions to BSDEs and mean-field BSDEs
have been obtained in the framework of continuous time diffusion.
Recently,  Cohen and Elliott \cite{Cohen2008} introduced a new kind
of BSDEs of the form, for $t\in[0, T]$
\begin{eqnarray}\label{bsdem:1}
 Y_t=\xi+\int_t^Tf(s,Y_{s-},Z_s)ds-\int_t^TZ_sdM_s,
\end{eqnarray}
where  $M_t$ is a martingale related to a finite state continuous
time Markov chain (the details of $M_t$ will be given in Section 2).
In Cohen and Elliott \cite{Cohen2008}, the authors proved the
existence and uniqueness of solutions for those equations under
Lipschitz condition. Furthermore, Cohen and Elliott
\cite{Cohen2010a} gave a scalar and vector comparisons for solutions
of the BSDEs on Markov chains. Furthermore, they discussed arbitrage
and risk measure in scalar case.

 Very recently, Cohen and Elliott \cite{Cohen2010b}  established the
existence and uniqueness as well as comparison theorem for BSDEs in
general spaces.   In Cohen et al. \cite{CohenElliottPearce2010},
they established a general comparison theorem for BSDEs based on
arbitrary martingales and gave its applications to the theory of
nonlinear expectations.

Motivated by above works, the present paper deal with a class of
Mean-field BSDEs on Markov Chains of the form
\begin{eqnarray}\label{mfsdem:1}
 Y_t=\xi+\int_t^TE'[f(s,Y_{s-}',Z_{s}',Y_{s-},Z_{s})]ds-\int_t^TZ_{s}dM_s,
\end{eqnarray}
To the best of our knowledge, so far little is known about this new
kind of BSDEs. Our aim is to find a pair of adapted processes
$(Y,Z)$ in an appropriate space such that  \eqref{mfsdem:1}  hold.
We also present a comparison theorem for the solutions of BSDEs
\eqref{mfsdem:1}. We see that our BSDE \eqref{mfsdem:1} includes
BSDE \eqref{bsdem:1} as a special case.

The paper is organized as follows. In Section 2, we introduce some
preliminaries. Section 3 is devoted to the proof of the existence
and uniqueness of the solutions to Mean-field BSDEs on Markov chains
. In Section 4, we give a comparison theorem for the solutions of
Mean-field BSDEs.

\section{Preliminaries}

 Let $T>0$ be fixed throughout this paper. Let
$X=\{X_t, t\in[0, T]\}$ be a continuous time finite state Markov
chain. The states of this process can be identified with the unite
vector in $ R^N$, where $N$ is the number of states of the chain.

Let $(\Omega, \mathcal {F}, P)$ be a complete
 probability space. We denote by $\mathbb{F}=\{\mathcal {F}_t, 0\leq t\leq T\}$
the natural filtration generated by $X=\{X_t, t\in[0, T]\}$ and
augmented by all $P$-null sets, i.e.,
$$\mathcal {F}_t=\sigma\{X_u, 0\leq u\leq t\}\vee \mathscr{N}_P,$$
where $\mathscr{N}_P$ is the set of all $P$-null subsets.

Let $A_t$ be the rate matrix for the chain $X$ at time $t$, then
this chain has the representation
\begin{eqnarray*}\label{equation:1}
X_t=X_0+\int_0^tA_uX_{u-}du+M_t,
\end{eqnarray*}
where $M_t$ is a martingale related to the chain $X=\{X_t, t\in[0,
T]\}$. The optional quadratic  variation of $M_t$ is given by the
matrix process
\begin{eqnarray*}
[M,M]_t=\sum_{0<u\leq t}\Delta M_u\Delta M_u^*
\end{eqnarray*}
and
\begin{eqnarray*}
\langle M,M\rangle_t=\int_{]0, \; t]}[ {\rm diag} (A_uX_{u-})-{\rm
diag} (X_{u-})A_u^*-A_u{\rm diag} (X_{u-})]du,
\end{eqnarray*}
where $[\cdot]^*$ denotes matrix/vector transposition.

Let $\Phi_t$ be the nonnegative definite matrix

$$\Phi_t:={\rm diag}(A_tX_{t-})-{\rm
diag}(X_{t-})A_t^*-A_t{\rm diag}(X_{t-})$$ and
$$\|Z\|_{X_{t-}}:=\sqrt{{\rm Tr}(Z\Phi_tZ^*)}.$$
Then $\|\cdot\|_{X_{t-}}$ defines a (stochastic) seminorm, with the
property that

$${\rm Tr}(Z_td\langle M, M\rangle_tZ_t^*)=\|Z\|_{X_{t-}}^2dt.$$

Now, we provide some spaces and notations used in the sequel.
\begin{itemize}
\item  $L^{p}(\Omega, \mathcal{F}_T, P):=\{\xi:$   real
valued $\mathcal{F}_T$-measurable  random variable
$E|\xi|^p<+\infty, p\geq 1\}$;

\item  $L^{0}(\Omega, \mathcal{F}, P;  R^{n}):=\{\xi:$
$R^{n}$-valued $\mathcal{F}$-measurable random variable$\}$;

\item  $S^{2}_{\mathbb{F}}(R):=\{Y: \Omega\times [0,T]\rightarrow  R$
c\`{a}dl\`{a}g and $\mathbb{F}$-adapted, $E\big[\sup_{t\in[0, T]}
|Y_t|^2\big] <+\infty\}$;

\item  $H^{2}_{X,\mathbb{F}}( R^{N}):=\{Z:
\Omega\times [0,T]\rightarrow  R^{N}$, left continuous and
predictable, $E\int_0^T \|Z_t\|^2_{X_{t-}}dt<+\infty\}$.
\end{itemize}

 Let  $(\bar{\Omega}, \bar{\mathcal{F}},
\bar{P})=(\Omega\times\Omega, \mathcal{F}\otimes\mathcal{F},
P\otimes P )$ be the (non-completed) product of  $(\Omega,
\mathcal{F}, P)$ with itself. We denote the filtration of this
product space by
$\bar{\mathbb{F}}=\{\bar{\mathcal{F}}_t=\mathcal{F}\otimes\mathcal{F}_t,
0\leq t\leq T\}$. A random variable $\xi\in L^{0}(\Omega,
\mathcal{F}, P;  R^{n})$ originally defined on $\Omega$ is extended
canonically to $\overline{\Omega}:
\xi'(\omega',\omega)=\xi(\omega'), (\omega',\omega)\in
\bar{\Omega}=\Omega\times\Omega$. For any $\theta\in
L^{1}(\bar{\Omega}, \bar{\mathcal{F}}, \bar{P})$ the variable
$\theta(\cdot  ,\omega):\Omega\rightarrow  R$  belongs to
$L^{1}(\Omega, \mathcal{F}, P)$, $P(d\omega)$-a.s.; we denote its
expectation by
$$ E'[\theta(\cdot,\omega)]=\int_{\Omega}\theta(\omega',\omega)P(d\omega').$$
Notice that $ E'[\theta]= E'[\theta(\cdot,\omega)]\in L^{1}(\Omega,
\mathcal{F}, P)$,  and
$$\bar{ E}[\theta]\Big(=\int_{\Omega}\theta d\bar{P}
=\int_{\Omega} E'[\theta(\cdot,\omega)]P(d\omega)\Big)= E[
E'[\theta]].$$

For convenience, we rewrite mean-field BSDEs \eqref{mfsdem:1} as
below:
\begin{eqnarray}\label{mfsdem:2}
 Y_t=\xi+\int_t^TE'[f(s,Y_{s-}',Z_{s}',Y_{s-},Z_{s})]ds-\int_t^TZ_{s}dM_s.
\end{eqnarray}
The coefficient of our mean-field BSDE is a function $f=f(\omega',
\omega, t, y', z', y, z):\bar{\Omega}\times[0,T]\times
 R\times  R^N\times  R\times
 R^N\rightarrow   R$ which is
$\bar{\mathbb{F}}$-progressively measurable, for all $(y',z',y,z)$.
We make the following assumptions:

(A1) There exists a constant $C\geq 0$ such that, $dt\times
\bar{P}$-a.s., $y_1,y_2,y_1',y_2'\in R$,$z_1,z_2,z_1',z_2'\in R^N$,
\begin{eqnarray*}
&&|f(\omega',\omega, t, y_1',z_1',y_1,z_1)-f(\omega',\omega, t,
y_2',z_2',y_2,z_2)|\nonumber\\&&\quad\leq C\Big(|y_1'-y_2'|
+\|z_1'-z_2'\|_{X_{t-}} +|y_1-y_2| +\|z_1-z_2\|_{X_{t-}} \Big);
\end{eqnarray*}

(A2) $ \bar{E} \int_{0}^T|f(t,0,0,0,0)|^2dt <+\infty$.
\begin{remark}  Since the integral in \eqref{mfsdem:2} is with respect to Lebesgue measure and our processes have
at most countably many jumps, in this case the equation is unchanged
whether the left limits are included or not.
\end{remark}
\begin{remark} We emphasize that, due to our notations, the driving coefficient
$f$ of \eqref{mfsdem:2} has to be interpreted as follows
\begin{eqnarray*}
 E'[f(s,Y_s',Z_s',Y_s,Z_s)](\omega)&=&E'[f(s,Y_s',Z_s',Y_s(\omega),Z_s(\omega))]
\\&=&\int_{\Omega}f(s,Y_s'(\omega'),Z_s'(\omega'),Y_s(\omega),Z_s(\omega))P(d\omega').
\end{eqnarray*}
\end{remark}
\begin{definition}
 A solution to mean-filed
BSDE  \eqref{mfsdem:2} is  a couple $(Y,Z)=(Y_t,Z_t)_{0\leq t\leq
T}$ satisfying   \eqref{mfsdem:2} such that $(Y,Z)\in
S^{2}_{\mathbb{F}}(R) \times H^{2}_{X,\mathbb{F}}( R^{N})$.
\end{definition}

\section{Existence and uniqueness of solutions}
In this section, we aim to derive the
 existence and uniqueness result for the solutions
of mean-field BSDEs on Markov chains.

Before stating our main theorem, we  recall an existence and
uniqueness result in Cohen and Elliott  \cite{Cohen2008}, or more
precisely, in Cohen and Elliott \cite{Cohen2010b}.
\begin{lemma}\label{lemma:1}
Given $\xi\in L^{2}(\Omega, \mathcal{F}_T, P)$. Suppose assumptions
{\rm(A1)} and {\rm(A2)} hold. Then  BSDE {\rm \eqref{bsdem:1}} has a
unique solution $(Y,Z)\in  S^{2}_{\mathbb{F}}(R) \times
H^{2}_{X,\mathbb{F}}( R^{N})$, and the solution is the  unique such
solution, up to indistinguishability for $Y$ and  equality $d\langle
M, M\rangle_t\times P$-a.s. for $Z$.
\end{lemma}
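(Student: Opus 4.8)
The plan is to obtain the solution as the unique fixed point of a Picard-type contraction on the Banach space $\mathcal{B}:=S^{2}_{\mathbb{F}}(R)\times H^{2}_{X,\mathbb{F}}(R^{N})$, following the scheme of Pardoux and Peng \cite{PP} adapted to the chain martingale $M$. On $\mathcal{B}$ I would use, for a parameter $\beta>0$ to be fixed later, the equivalent weighted norm
$$\|(Y,Z)\|_{\beta}^{2}:=E\int_{0}^{T}e^{\beta t}\big(|Y_t|^{2}+\|Z_t\|_{X_{t-}}^{2}\big)\,dt.$$
Given $(y,z)\in\mathcal{B}$, set $g_s:=f(s,y_{s-},z_s)$ and define $(Y,Z)=\Gamma(y,z)$ to be the solution of the BSDE with this frozen driver,
$$Y_t=\xi+\int_t^T g_s\,ds-\int_t^T Z_s\,dM_s.$$
The argument then splits into showing that $\Gamma$ maps $\mathcal{B}$ into itself and that, for $\beta$ large enough, it is a strict contraction.

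For well-definedness I would first check, via the Lipschitz bound (A1) and the integrability hypothesis (A2), that $g\in L^{2}(\Omega\times[0,T])$: indeed $|g_s|\le|f(s,0,0)|+C(|y_{s-}|+\|z_s\|_{X_{s-}})$, and each term is square integrable because $(y,z)\in\mathcal{B}$. The linear BSDE with driver $g$ is then solved explicitly: the process $L_t:=E\big[\xi+\int_0^T g_s\,ds\,\big|\,\mathcal{F}_t\big]$ is a square-integrable $\mathbb{F}$-martingale, so the martingale representation theorem for the chain $X$ (established in Cohen and Elliott \cite{Cohen2008}) yields a predictable $Z\in H^{2}_{X,\mathbb{F}}(R^{N})$ with $L_t=L_0+\int_0^t Z_s\,dM_s$. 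Setting $Y_t:=E\big[\xi+\int_t^T g_s\,ds\,\big|\,\mathcal{F}_t\big]$ and rearranging recovers the displayed equation, and standard moment estimates give $(Y,Z)\in\mathcal{B}$. Since the seminorm $\|\cdot\|_{X_{t-}}$ may degenerate ($\Phi_t$ being only nonnegative definite), $Z$ is pinned down only modulo the kernel of $\Phi_t$, which is exactly equality $d\langle M,M\rangle_t\times P$-a.s.; this is the source of the uniqueness qualifier in the statement.

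For the contraction estimate, take two inputs and write $\hat y=y^1-y^2$, $\hat z=z^1-z^2$, $\hat Y=Y^1-Y^2$, $\hat Z=Z^1-Z^2$, $\hat g=g^1-g^2$. Applying the Itô formula for jump processes to $e^{\beta t}|\hat Y_t|^{2}$ and taking expectations, the martingale part vanishes and the bracket term produces $E\int_t^T e^{\beta s}\,{\rm Tr}(\hat Z_s\,d\langle M,M\rangle_s\,\hat Z_s^{*})=E\int_t^T e^{\beta s}\|\hat Z_s\|_{X_{s-}}^{2}\,ds$, using the identity ${\rm Tr}(Z_t\,d\langle M,M\rangle_tZ_t^{*})=\|Z\|_{X_{t-}}^{2}dt$. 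Since $\hat Y_T=0$, one is left with
$$E\int_0^T e^{\beta s}\big(\beta|\hat Y_s|^{2}+\|\hat Z_s\|_{X_{s-}}^{2}\big)\,ds\le 2E\int_0^T e^{\beta s}|\hat Y_s|\,|\hat g_s|\,ds,$$
and after bounding $|\hat g_s|\le C(|\hat y_{s-}|+\|\hat z_s\|_{X_{s-}})$ by (A1) and applying Young's inequality, a choice of $\beta$ depending only on $C$ makes $\Gamma$ a strict contraction for $\|\cdot\|_{\beta}$. The Banach fixed-point theorem then delivers a unique fixed point, which is the desired solution, unique in the stated sense.

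The step I expect to be the main obstacle is the Itô/energy computation in the jump framework: one must handle the quadratic-variation term carefully, distinguishing the optional bracket $[M,M]$ from the predictable bracket $\langle M,M\rangle$ and working with the degenerate seminorm $\|\cdot\|_{X_{t-}}$ rather than a genuine norm, so that all cross terms are controlled and the final estimate closes uniformly. The martingale representation theorem for $M$ is the other essential input, but it is available from \cite{Cohen2008}, so for this lemma it may simply be invoked.
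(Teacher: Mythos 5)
The paper offers no proof of this lemma at all---it is recalled as a known result from Cohen and Elliott \cite{Cohen2008} (more precisely \cite{Cohen2010b})---and your argument, a Picard contraction in a $\beta$-weighted norm built on the chain martingale representation theorem, with the degeneracy of the seminorm $\|\cdot\|_{X_{t-}}$ accounting for the uniqueness of $Z$ only $d\langle M,M\rangle_t\times P$-a.s., is exactly the proof given in those references, so your approach matches the cited source. One small imprecision worth fixing: your weighted norm $\|(Y,Z)\|_{\beta}$ is equivalent to the unweighted norm on $L^{2}(\Omega\times[0,T])\times H^{2}_{X,\mathbb{F}}(R^{N})$ but \emph{not} on $S^{2}_{\mathbb{F}}(R)\times H^{2}_{X,\mathbb{F}}(R^{N})$ (the $\sup$-norm on $Y$ is strictly stronger than $E\int_{0}^{T}|Y_t|^{2}dt$), so the contraction should be run in the former space and membership of the fixed point in $S^{2}_{\mathbb{F}}(R)$ recovered afterwards from the equation via Doob's inequality applied to the martingale part---a standard repair that does not affect correctness.
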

For the solutions of mean-field BSDE {\rm \eqref{mfsdem:2}}, we
first establish the following  unique result.
\begin{lemma}\label{lemma:2}
Given $\xi\in L^{2}(\Omega, \mathcal{F}_T, P)$. Suppose assumptions
{\rm(A1)} and {\rm(A2)} hold. Then mean-field BSDE {\rm
\eqref{mfsdem:2}} has at most one solution $(Y,Z)\in
S^{2}_{\mathbb{F}}(R) \times H^{2}_{X,\mathbb{F}}( R^{N})$.
\end{lemma}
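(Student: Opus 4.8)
The plan is to prove uniqueness via a standard contraction/energy estimate, adapted to the mean-field setting on the Markov chain. Suppose $(Y^1,Z^1)$ and $(Y^2,Z^2)$ are two solutions of \eqref{mfsdem:2} in $S^2_{\mathbb{F}}(R)\times H^2_{X,\mathbb{F}}(R^N)$ with the same terminal value $\xi$. Set $\hat Y=Y^1-Y^2$ and $\hat Z=Z^1-Z^2$, so that $\hat Y_T=0$. Subtracting the two equations, $\hat Y$ satisfies a BSDE driven by the difference of the generators, $\hat g_s := E'[f(s,Y^{1\prime}_{s-},Z^{1\prime}_s,Y^1_{s-},Z^1_s)]-E'[f(s,Y^{2\prime}_{s-},Z^{2\prime}_s,Y^2_{s-},Z^2_s)]$, with martingale part $\int_t^T \hat Z_s\,dM_s$. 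The essential tool is to apply the It\^o product rule to $e^{\beta s}|\hat Y_s|^2$ for a parameter $\beta>0$ to be chosen, taking expectations to kill the martingale term.

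First I would write out the It\^o expansion, which produces the term $\int_t^T \operatorname{Tr}(\hat Z_s\,d\langle M,M\rangle_s\hat Z_s^*)=\int_t^T\|\hat Z_s\|^2_{X_{s-}}\,ds$ from the quadratic variation (using the identity for the seminorm stated in Section~2), plus a cross term $2\int_t^T e^{\beta s}\hat Y_{s-}\hat g_s\,ds$ and the $\beta$-term $-\beta\int_t^T e^{\beta s}|\hat Y_{s-}|^2\,ds$. Taking $\bar E$ (equivalently $E$, since $\hat Y,\hat Z$ are $\mathbb{F}$-adapted) and using $\hat Y_T=0$ gives an identity of the form
\begin{eqnarray*}
E\big[e^{\beta t}|\hat Y_t|^2\big]+E\!\int_t^T e^{\beta s}\big(\beta|\hat Y_{s-}|^2+\|\hat Z_s\|^2_{X_{s-}}\big)ds
= 2E\!\int_t^T e^{\beta s}\hat Y_{s-}\hat g_s\,ds.
\end{eqnarray*}
The key point specific to the mean-field structure is bounding $|\hat g_s|$. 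Here the Lipschitz assumption (A1) is applied inside the $E'$-expectation: by Jensen and (A1), $|\hat g_s|\le C\,\bar E[|\hat Y'_{s-}|+\|\hat Z'_s\|_{X_{s-}}]+C(|\hat Y_{s-}|+\|\hat Z_s\|_{X_{s-}})$, where the primed quantities live on the first copy of $\bar\Omega$. The main obstacle is handling these primed (independent-copy) terms correctly: after taking $E$ on $\Omega$, the expectation $\bar E[|\hat Y'_{s-}|+\|\hat Z'_s\|_{X_{s-}}]$ is a deterministic number equal to $E[|\hat Y_{s-}|+\|\hat Z_s\|_{X_{s-}}]$, so by Cauchy--Schwarz the whole right-hand side is controlled by $E[|\hat Y_{s-}|^2]$ and $E[\|\hat Z_s\|^2_{X_{s-}}]$ with manageable constants.

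Finally I would absorb the $\hat Z$-contributions: after applying Young's inequality $2ab\le \varepsilon a^2+\varepsilon^{-1}b^2$ to split the cross term, choose $\varepsilon$ small enough that the $\|\hat Z_s\|^2_{X_{s-}}$ terms coming from the right side are strictly dominated by the $\int_t^T e^{\beta s}\|\hat Z_s\|^2_{X_{s-}}\,ds$ on the left, and then choose $\beta$ large enough (depending only on $C$ and $\varepsilon$) that the remaining $|\hat Y_{s-}|^2$ coefficient on the right is dominated by the $\beta$-term on the left. This forces $E\int_0^T e^{\beta s}(|\hat Y_{s-}|^2+\|\hat Z_s\|^2_{X_{s-}})\,ds=0$, whence $\hat Z=0$ in $H^2_{X,\mathbb{F}}(R^N)$ (i.e.\ $d\langle M,M\rangle_t\times P$-a.s.) and $\hat Y=0$, giving $Y^1=Y^2$ up to indistinguishability by the c\`adl\`ag property. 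I expect the only delicate bookkeeping to be the doubling of the Lipschitz constant caused by the mean-field terms, which is precisely why the $\beta$-weight must be taken large; everything else is the routine Gronwall-type energy argument.
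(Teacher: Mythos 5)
Your proposal is correct and takes essentially the same route as the paper's proof: apply the product/It\^o rule to the squared difference $|\hat Y|^2$, take expectations to kill the martingale term and turn the jump sum into $\int_t^T E\|\hat Z_s\|^2_{X_{s-}}ds$, use (A1) together with the key mean-field observation that $E'$ of the primed differences is the deterministic quantity $E\big[|\hat Y_{s-}|+\|\hat Z_s\|_{X_{s-}}\big]$ (so Cauchy--Schwarz reduces everything to unprimed second moments), and absorb the $\hat Z$ contribution via Young's inequality with a suitably chosen parameter. The only cosmetic difference is that you carry an $e^{\beta s}$ weight and take $\beta$ large where the paper works with the unweighted quantity and finishes with Gr\"onwall's inequality; the two devices are interchangeable here.
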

\begin{proof}
Let $(Y^i, Z^i)\in S^{2}_{\mathbb{F}}(R) \times
H^{2}_{X,\mathbb{F}}( R^{N}), i=1,2$  be two solutions of mean-field
BSDE {\rm \eqref{mfsdem:2}}.  Define $ \hat{Y}=Y^1-Y^2,
\hat{Z}=Z^1-Z^2$ , we then have
\begin{eqnarray*}
\hat{Y}(t)=\int_t^TE'[\hat{f}(s)]ds-\int_t^T\hat{Z}_sdM_s,
\end{eqnarray*}
where $\hat{f}(s)=f(s, Y^{1\prime}_{s-} ,
Z^{1\prime}_{s},Y_{s-}^1,Z_{s}^1) -f(s, Y^{2\,\prime}_{s-} ,
Z^{2\,\prime}_{s},Y_{s-}^2,Z_{s}^2)$.

 Using the Stieltjes chain rule for
products, we get
\begin{eqnarray}\label{ineq:14}
|\hat{Y}_t|^2&=&|\hat{Y}_0|^2-2\int_0^t \hat{Y}_{s-}E'[\hat{f}(s)]ds
 +2\int_0^t\hat{Y}_{s-}\hat{Z}_sdM_s+\sum_{0<s\leq t} |\Delta
Y^1_s-\Delta Y^2_s |^2.
 \end{eqnarray}
Taking expectation on both sides of \eqref{ineq:14} and evaluating
at $t=T$, we obtain
\begin{eqnarray}\label{ineq:16}
 E|\hat{Y}_t|^2&=&2\int_t^T E[\hat{Y}_{s-}E'[\hat{f}(s)]]ds
 - E\sum_{t<s\leq T}|\Delta Y^1_s-\Delta Y^2_s|^2
\nonumber \\&=&2\int_t^T E[\hat{Y}_{s-} E'[\hat{f}(s)]] ds
 - E\sum_{t<s\leq T}|(Z^1_s- Z^2_s)\Delta M_s|^2
  \nonumber \\&=& 2\int_t^T E[\hat{Y}_{s-}E'[\hat{f}(s)]]ds
 -\int_t^T E \|\hat{Z}_s\|_{X_{s-}}^2ds.
 \end{eqnarray}
On the other hand,  by (A1) and Young's inequality $2ab\leq
\frac{1}{\rho}a^2+\rho b^2$, for any $\rho>0$, it hold
\begin{eqnarray*}
 && 2\int_t^T E[\hat{Y}_{s-}E'[\hat{f}(s)]]ds
\\&\leq& 2C\int_t^T E\Big[\hat{Y}_{s-}E'\big[|\hat{Y}'_{s-}|
+\|\hat{Z}'_{s}\|_{X_{s-}}+|\hat{Y}_{s-}|+\|\hat{Z}_{s}\|_{X_{s-}}\big]\Big]
ds
\\&\leq&  4C\int_t^T E |\hat{Y}_{s-}|^2 ds+2C\int_t^T \big[\rho E|\hat{Y}_{s-}|^2
+\frac{1}{\rho}E\|\hat{Z}_{s}\|^2_{X_{s-}}\big]ds.
  \end{eqnarray*}
Choosing $\rho=3C$, we obtain
\begin{eqnarray*}\label{ineq:17}
 2\int_t^T E[\hat{Y}_{s-}E'[\hat{f}(s)]]ds
  &\leq& (6C^2+4C)\int_t^T E |\hat{Y}_{s-}|^2ds
+\frac{2}{3}\int_t^TE\|\hat{Z}_{s}\|^2_{X_{s-}}ds.
  \end{eqnarray*}
This together with \eqref{ineq:16} implies
\begin{eqnarray*}\label{ineq:18}
 E|\hat{Y}_t|^2+\frac{1}{3}\int_t^TE\|\hat{Z}_{s}\|^2_{X_{s-}}ds
\leq(6C^2+4C)\int_t^T E |\hat{Y}_{s-}|^2ds.
 \end{eqnarray*}
An application of Gr\"{o}nwall's inequality gives
$$E|\hat{Y}_t|^2=0,\quad E\|\hat{Z}_t\|^2_{X_{t-}}=0,$$
i.e., $Y^1_{t}=Y^2_{t}$ and $Z^1_{t}=Z^2_{t}$ $P$-a.s. for each $t$.
The proof is complete.
\end{proof}

Next, let's consider a simplified version of mean-field BSDEs
\eqref{mfsdem:2} as follows
\begin{eqnarray}\label{mfe:6}
 Y_t =\xi+\int_t^T E'[f(s, Y_{s-}', Y_{s-} ,
Z_s )]ds-\int_t^TZ_s dM_s.
 \end{eqnarray}
 We have the following existence and uniqueness result.
\begin{lemma}\label{lemma:3}
Given $\xi\in L^{2}(\Omega, \mathcal{F}_T, P)$. Suppose assumptions
{\rm(A1)} and {\rm(A2)} hold. Then  mean-field BSDE {\rm
\eqref{mfe:6}} has a unique solution $(Y,Z)\in S^{2}_{\mathbb{F}}(R)
\times H^{2}_{X,\mathbb{F}}( R^{N}) $.
\end{lemma}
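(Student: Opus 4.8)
The plan is to establish existence for the simplified mean-field BSDE \eqref{mfe:6} by a fixed-point argument that reduces each step to the classical (non-mean-field) equation covered by Lemma \ref{lemma:1}. The key observation is that in \eqref{mfe:6} the driver depends on the law of the solution only through the \emph{frozen} arguments $(Y_{s-}',Y_{s-})$ entering $f$, and the $E'$-expectation integrates out the primed variables. So if I fix a candidate process $U\in S^{2}_{\mathbb{F}}(R)$ and substitute it into the primed slots, the resulting equation becomes a standard BSDE on the Markov chain whose driver depends only on the non-primed $(Y_{s-},Z_s)$, to which Lemma \ref{lemma:1} applies directly once I verify the Lipschitz and integrability conditions are inherited.

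Concretely, I would define a map $\Gamma:S^{2}_{\mathbb{F}}(R)\to S^{2}_{\mathbb{F}}(R)$ as follows. Given $U$, set $g(s,y,z):=E'[f(s,U_{s-}',y,z)]$; by assumption (A1) this $g$ is Lipschitz in $(y,z)$ with the same constant $C$ (since $E'$ is an expectation, it does not enlarge the Lipschitz bound), and by (A2) together with $U\in S^{2}_{\mathbb{F}}(R)$ one checks $g(\cdot,0,0)$ is square-integrable. Hence by Lemma \ref{lemma:1} the BSDE
\begin{eqnarray*}
 Y_t=\xi+\int_t^T g(s,Y_{s-},Z_s)\,ds-\int_t^TZ_s\,dM_s
\end{eqnarray*}
has a unique solution $(Y,Z)\in S^{2}_{\mathbb{F}}(R)\times H^{2}_{X,\mathbb{F}}(R^N)$, and I set $\Gamma(U):=Y$. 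A solution of \eqref{mfe:6} is then exactly a fixed point of $\Gamma$, uniqueness being already guaranteed by Lemma \ref{lemma:2}.

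To produce the fixed point I would run the standard a priori estimate on the difference of two iterates. Taking $U^1,U^2$ with outputs $(Y^i,Z^i)$, applying the Stieltjes product rule to $|Y^1_t-Y^2_t|^2$ exactly as in the proof of Lemma \ref{lemma:2}, taking expectations and using (A1) and Young's inequality, I obtain an estimate of the form
\begin{eqnarray*}
 E|Y^1_t-Y^2_t|^2+\int_t^T E\|Z^1_s-Z^2_s\|^2_{X_{s-}}\,ds
 \leq K\int_t^T E|Y^1_s-Y^2_s|^2\,ds + K\int_t^T E|U^1_{s-}-U^2_{s-}|^2\,ds,
\end{eqnarray*}
for a constant $K$ depending only on $C$. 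The extra term on the right comes from the primed argument and is controlled in the $S^2$-norm of $U^1-U^2$. The main obstacle is to convert this into a genuine contraction: a direct Gr\"onwall bound alone does not contract, so I would either introduce a weighted norm $E\int_0^T e^{\beta s}|\,\cdot\,|^2 ds$ with $\beta$ large enough to make the coefficient strictly less than one, or iterate the map on a short time interval $[T-\delta,T]$ where $K\delta<1$ gives contraction, and then paste intervals together to cover $[0,T]$. Either device yields a unique fixed point of $\Gamma$ by the Banach fixed-point theorem, and that fixed point is the desired solution $(Y,Z)$ of \eqref{mfe:6}; combined with Lemma \ref{lemma:2} this gives existence and uniqueness.
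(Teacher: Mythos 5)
Your proposal is correct and takes essentially the same approach as the paper: freeze the mean-field (primed) argument so that each step reduces to the classical result of Lemma \ref{lemma:1}, and conclude via successive approximation using exactly the a priori estimate you describe. The paper simply runs your map as an explicit Picard iteration (freezing both $Y$-slots at the previous iterate, so each step's driver depends only on $Z^{n+1}$) and, in place of your weighted norm or interval-pasting device, iterates the integral inequality $u^{n+1}(t)\le c\int_t^T e^{c(s-t)}u^n(s)\,ds$ to get the factorial bound $u^{n+1}(0)\le \frac{(ce^{c})^{n}}{n!}u^1(0)$ --- an interchangeable way of obtaining the same contraction.
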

\begin{proof}
Let $Y_t^0=0$,  $t\in[0, T]$, we consider the following mean-field
BSDE:
\begin{eqnarray}\label{mfe:8}
 Y_t^{n+1}=\xi+\int_t^T E'[f(s, Y_{s-}^{n \,\prime}, Y_{s-}^n,
Z_s^{n+1})]ds-\int_t^TZ_s^{n+1}dM_s.
 \end{eqnarray}
According to Lemma \ref{lemma:1}, we can define recursively
$(Y^{n+1}, Z^{n+1})$  be the solution of BSDE \eqref{mfe:8}. For
$t\in[0, T]$, we have
\begin{eqnarray}\label{mfe:9}
 Y_t^{n+1}- Y_t^{n}&=&\int_t^T E'[f(s, Y_{s-}^{n \,\prime}, Y_{s-}^n,
Z_s^{n+1})-f(s, Y_{s-}^{n-1 \,\prime}, Y_{s-}^{n-1},
Z_s^{n})]ds-\int_t^T(Z_s^{n+1}-Z_s^{n})dM_s \nonumber
\\&=&Y_0^{n+1}- Y_0^{n}- \int_0^t E'[f(s, Y_{s-}^{n \,\prime},
Y_{s-}^n, Z_s^{n+1})-f(s, Y_{s-}^{n-1 \,\prime}, Y_{s-}^{n-1},
Z_s^{n})]ds.
 \nonumber
\\&&-\int_0^t(Z_s^{n+1}-Z_s^{n})dM_s
 \end{eqnarray}
Using the Stieltjes chain rule for products, we have
\begin{eqnarray*}\label{mfe:10}
 &&|Y_t^{n+1}- Y_t^{n} |^2
\\&=& |Y_0^{n+1}- Y_0^{n}|^2-2\int_0^t
(Y_{s-}^{n+1}-Y_{s-}^{n})E'[f(s, Y_{s-}^{n \,\prime}, Y_{s-}^n,
Z_s^{n+1})-f(s, Y_{s-}^{n-1 \,\prime}, Y_{s-}^{n-1}, Z_s^{n})]ds
\\&&
+2\int_0^t(Y_{s-}^{n+1}-Y_{s-}^{n})(Z_s^{n+1}-Z_s^{n})dM_s
+\sum_{0<s\leq t} |\Delta Y_s^{n+1}- \Delta Y_s^{n} |^2.
 \end{eqnarray*}
Taking expectation and evaluating at $t=T$, we obtain
\begin{eqnarray}\label{mfe:11}
 E|Y_t^{n+1}- Y_t^{n}|^2&=&2E\int_t^T
\big[(Y_{s-}^{n+1}-Y_{s-}^{n})E'[f(s, Y_{s-}^{n \,\prime}, Y_{s-}^n,
Z_s^{n+1})-f(s, Y_{s-}^{n-1 \,\prime}, Y_{s-}^{n-1},
Z_s^{n})]\big]ds
 \nonumber\\&&
   -\int_t^T E \|Z_s^{n+1}- Z_s^{n}\|_{X_{s-}}^2ds,
 \end{eqnarray}
By  (A1) and Young's inequality, for any $\rho>0$, we have
\begin{eqnarray}\label{mfe:12}
 &&2E\int_t^T [(Y_{s-}^{n+1}-Y_{s-}^{n})E'[f(s, Y_{s-}^{n \,\prime},
Y_{s-}^n, Z_s^{n+1})-f(s, Y_{s-}^{n-1 \,\prime}, Y_{s-}^{n-1},
Z_s^{n})]]ds
  \nonumber\\&\leq&
 2CE\int_t^T
\Big[(Y_{s-}^{n+1}-Y_{s-}^{n})E'[|Y_{s-}^{n \,\prime}-Y_{s-}^{n-1
\,\prime}|+|Y_{s-}^{n}-Y_{s-}^{n-1}|+\|Z_s^{n+1}-
Z_s^{n}\|_{X_{s-}}]\Big]ds
 \nonumber\\&\leq& \frac{3C}{\rho} \int_t^T
 E|Y_{s-}^{n+1}-Y_{s-}^{n}|^2ds +2\rho C\int_t^T
   E|Y_{s-}^{n}-Y_{s-}^{n-1}|^2ds + \rho C\int_t^T
 E\|Z_s^{n+1}- Z_s^{n}\|_{X_{s-}}^2 ds.
 \end{eqnarray}
Choosing $\rho=\frac{1}{2C}$, combining \eqref{mfe:11} and
\eqref{mfe:12}, we then have
\begin{eqnarray}\label{mfe:13}
  && E|Y_t^{n+1}- Y_t^{n}|^2+\frac{1}{2}\int_t^T E \|Z_s^{n+1}- Z_s^{n}\|_{X_{s-}}^2ds
 \nonumber\\&\leq&  c[\int_t^TE|Y_{s}^{n+1}-Y_{s}^{n}|^2ds+
\int_t^TE|Y_{s}^{n}-Y_{s}^{n-1}|^2ds],
\end{eqnarray}
where $c=\max\{6C^2, 1\}$. Let $u^n(t)=\int_t^T
E|Y_{s}^{n}-Y_{s}^{n-1}|^2ds$, it follows from  \eqref{mfe:13}
\begin{eqnarray*} \label{mfe:14}
-\frac{d u^{n+1}(t)}{dt}(t)-cu^{n+1}(t)\leq cu^n(t),\quad
u^{n+1}(T)=0.
\end{eqnarray*}
Integration gives
\begin{eqnarray*} \label{mfe:15}
 u^{n+1}(t) \leq c\int_t^Te^{c(s-t)}u^n(s)ds.
\end{eqnarray*}
Iterating above inequality, we obtain
\begin{eqnarray*} \label{mfe:16}
 u^{n+1}(0) \leq \frac{(ce^{c})^{n}}{n!} u^1(0).
\end{eqnarray*}
This implies that $\{Y^{n}\}$ is a Cauchy sequence in $
S^{2}_{\mathbb{F}}(R)$. Then by (\ref{mfe:13}), $\{Z^{n}\}$ is a
Cauchy sequence in $H^{2}_{X,\mathbb{F}}( R^{N})$.

Passing to the limit on both sides of \eqref{mfe:8}, by (A2) and the
dominated convergence theorem, it follows that
$$Y:=\lim_{n\rightarrow\infty}Y^n,\quad Z:=\lim_{n\rightarrow\infty}Z^n$$
solves  BSDE \eqref{mfe:6}. The uniqueness is a direct consequence
of Lemma \ref{lemma:2}. The proof is complete.
\end{proof}
The main result of this section is the following theorem.
\begin{theorem}\label{theorem:2}
Assume  that {\rm(A1)} and {\rm(A2)} hold true. Then for any given
terminal conditions $\xi\in L^{2}(\Omega, \mathcal{F}_T, P)$, the
mean-field BSDE {\rm \eqref{mfsdem:2}} has a unique solution
$(Y,Z)\in S^{2}_{\mathbb{F}}(R) \times H^{2}_{X,\mathbb{F}}( R^{N})
$.
\end{theorem}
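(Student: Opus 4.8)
The plan is to build the solution of the full equation \eqref{mfsdem:2} from the simplified equation \eqref{mfe:6} of Lemma \ref{lemma:3} by \emph{freezing} the mean-field dependence on $Z'$ and then recovering that frozen argument through a fixed-point argument; uniqueness is already supplied by Lemma \ref{lemma:2}, so only existence has to be addressed.

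First, for a fixed $V\in H^{2}_{X,\mathbb{F}}(R^{N})$ I would substitute the canonical extension $V'$ into the $z'$-slot of $f$, i.e.\ set
$$f_V(\omega',\omega,s,y',y,z):=f(\omega',\omega,s,y',V_s(\omega'),y,z).$$
Because $f$ is Lipschitz in all of its arguments, $f_V$ satisfies (A1) in the remaining variables $(y',y,z)$, and (A2) for $f_V$ follows from (A2) for $f$ together with the bound $|f(s,0,V'_s,0,0)|\le |f(s,0,0,0,0)|+C\|V'_s\|_{X_{s-}}$ and $V\in H^{2}_{X,\mathbb{F}}(R^{N})$. Hence the equation
$$Y_t=\xi+\int_t^T E'[f_V(s,Y'_{s-},Y_{s-},Z_s)]\,ds-\int_t^T Z_s\,dM_s$$
is precisely of the form \eqref{mfe:6}, and Lemma \ref{lemma:3} yields a unique solution $(Y^V,Z^V)\in S^{2}_{\mathbb{F}}(R)\times H^{2}_{X,\mathbb{F}}(R^{N})$. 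This defines a map $\Psi(V):=Z^V$ on $H^{2}_{X,\mathbb{F}}(R^{N})$, and any fixed point $V=Z^V$ returns, together with $Y^V$, a genuine solution of \eqref{mfsdem:2}.

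The core of the argument is to show that $\Psi$ is a contraction for the weighted norm $\|Z\|_\beta^2:=E\int_0^T e^{\beta s}\|Z_s\|_{X_{s-}}^2\,ds$ with $\beta$ large. Given $V^1,V^2$ with solutions $(Y^i,Z^i)$, I would apply the Stieltjes chain rule to $e^{\beta t}|Y^1_t-Y^2_t|^2$ exactly as in the proofs of Lemmas \ref{lemma:2} and \ref{lemma:3}, take expectations, and bound the driver difference via (A1) and Young's inequality. The one new term, absent from Lemma \ref{lemma:3}, is the contribution of the frozen slot $E'[\|V^{1\prime}_s-V^{2\prime}_s\|_{X_{s-}}]$; after Jensen's inequality and the passage from $E'$ to $\bar E$ it produces a multiple of $\|V^1-V^2\|_\beta^2$, while the own-$z$ term $\|Z^1_s-Z^2_s\|_{X_{s-}}$ is absorbed on the left and the $\beta$-term dominates the remaining $|Y^1-Y^2|^2$ contributions. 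Choosing the Young parameter and then $\beta$ large enough drives the coefficient of $\|Z^1-Z^2\|_\beta^2$ on the right-hand side below $1$, so $\Psi$ is a contraction and Banach's theorem delivers the fixed point $V^\ast=Z^{V^\ast}$; then $(Y^{V^\ast},Z^{V^\ast})$ solves \eqref{mfsdem:2}, and uniqueness follows from Lemma \ref{lemma:2}.

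I expect the delicate step to be the treatment of the frozen mean-field $z'$-term through the stochastic seminorm $\|\cdot\|_{X_{t-}}$ on the product space: since $V'_s$ depends only on $\omega'$ whereas the matrix $\Phi_s$ defining $\|\cdot\|_{X_{s-}}$ depends only on $\omega$, one must pass from $E'\big[\|V^{1\prime}_s-V^{2\prime}_s\|_{X_{s-}}\big]$ to the unprimed quantity via Fubini on $(\bar\Omega,\bar{\mathcal F},\bar P)$ and the relation between $\bar E\big[\|Z'_s\|^2_{X_{s-}}\big]$ and $E\big[\|Z_s\|^2_{X_{s-}}\big]$ already exploited in the proof of Lemma \ref{lemma:2}. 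Securing this estimate with an admissible constant — so that the Young parameter and $\beta$ can indeed be tuned to a contraction — is the crux; the remaining computations merely repeat the energy estimates carried out above.
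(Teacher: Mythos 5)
Your proposal is correct and is essentially the paper's own argument: the paper likewise freezes the $z'$-slot at the previous iterate and solves the resulting equation of type \eqref{mfe:6} via Lemma \ref{lemma:3}, its Picard scheme \eqref{mfe:18} being exactly the iteration of your map $\Psi$, with the contraction obtained through the same Young/Gr\"onwall energy estimate in an equivalent weighted norm on $H^{2}_{X,\mathbb{F}}(R^{N})$ (the paper gets the factor $\tfrac12$ with $\rho=\tfrac{1}{3C}$), and uniqueness again delegated to Lemma \ref{lemma:2}. The only difference is packaging: the paper shows $\{Z^{n}\}$ is Cauchy directly instead of invoking Banach's fixed-point theorem, and the product-space subtlety about $E'\big[\|\cdot\|_{X_{s-}}\big]$ that you flag is present (and glossed over) in the paper's estimates as well.
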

\begin{proof} According to Lemma \ref{lemma:2}, all we need
to prove is the existence of solution for mean-field BSDE {\rm
\eqref{mfsdem:2}}.

 let  $Z_t^0=\textbf{0}$,
$t\in[0, T]$, in virtue of Lemma \ref{lemma:3}, we can define
recursively the pair of processes $(Y^{n+1}, Z^{n+1})$ be the unique
solution of  the following mean-field BSDE:
\begin{eqnarray}\label{mfe:18}
 Y_t^{n+1}=\xi+\int_t^T E'[f(s, Y_{s-}^{n+1 \,\prime},Z_{s}^{n \,\prime}, Y_{s-}^{n+1},
Z_s^{n+1})]ds-\int_t^TZ_s^{n+1}dM_s.
 \end{eqnarray}
Using the same procedure as above, we get
\begin{eqnarray*}\label{mfe:19}
&& E|Y_t^{n+1}- Y_t^{n}|^2
\\&=&2E\int_t^T
[(Y_{s-}^{n+1}-Y_{s-}^{n})E'[f(s, Y_{s-}^{n+1 \,\prime}, Z_{s}^{n
\,\prime}, Y_{s-}^{n+1}, Z_s^{n+1})-f(s, Y_{s-}^{n \,\prime},
Z_{s}^{n-1 \,\prime}, Y_{s-}^{n}, Z_s^{n})]]ds
\\&&
   -\int_t^T E \|Z_s^{n+1}- Z_s^{n}\|_{X_{s-}}^2ds
\\&\leq&2CE\int_t^T
\Big[(Y_{s-}^{n+1}-Y_{s-}^{n})E'[|Y_{s-}^{n+1 \,\prime}-Y_{s-}^{n
\,\prime}|+|Y_{s-}^{n+1}-Y_{s-}^{n}| +\|Z_{s-}^{n
\,\prime}-Z_{s-}^{n-1
\,\prime}\|_{X_{s-}}\\&&\qquad+\|Z_{s-}^{n+1}-Z_{s-}^{n}\|_{X_{s-}}]\Big]ds
   -\int_t^T E \|Z_s^{n+1}- Z_s^{n}\|_{X_{s-}}^2ds.
 \end{eqnarray*}
With the help of (A1) and Young's inequality, for any $\rho>0$, we
have
\begin{eqnarray*}\label{mfe:20}
&& E|Y_t^{n+1}- Y_t^{n}|^2
 \\&\leq&2CE\int_t^T
[(Y_{s-}^{n+1}-Y_{s-}^{n})E'[|Y_{s-}^{n+1 \,\prime}-Y_{s-}^{n
\,\prime}|+|Y_{s-}^{n+1}-Y_{s-}^{n}|+\|Z_{s-}^{n
\,\prime}-Z_{s-}^{n-1
\,\prime}\|_{X_{s-}}\\&&\qquad+\|Z_{s-}^{n+1}-Z_{s-}^{n}\|_{X_{s-}}]ds
   -\int_t^T E \|Z_s^{n+1}- Z_s^{n}\|_{X_{s-}}^2ds
\\&\leq& (4C+\frac{2C}{\rho})\int_t^T
E\big[|Y_{s-}^{n+1}-Y_{s-}^{n}|^2ds  +\rho C \int_t^TE
\|Z_{s-}^{n}-Z_{s-}^{n-1}\|_{X_{s-}}^2\big]ds
\\&&
+ (\rho C-1)
 \int_t^T E \|Z_s^{n+1}- Z_s^{n}\|_{X_{s-}}^2ds.
 \end{eqnarray*}
Define $k=4C+\frac{2C}{\rho}$,  by the backward Gr\"{o}nwall's
inequality, we obtain
\begin{eqnarray}\label{mfe:21}
&& E |Y_t^{n+1}- Y_t^{n} |^2 \nonumber \\&\leq& \rho C \int_t^TE
\|Z_{s}^{n}-Z_{s}^{n-1}\|_{X_{s-}}^2 ds + (\rho C-1)
 \int_t^T E \|Z_s^{n+1}- Z_s^{n}\|_{X_{s-}}^2ds\nonumber
\\&&
+ke^{-kt}\int_t^Te^{-ks}\big[\int_s^T\rho C E
\|Z_{u}^{n}-Z_{u}^{n-1}\|_{X_{u-}}^2 du + (\rho C-1)
 \int_s^T E \|Z_u^{n+1}- Z_u^{n}\|_{X_{u-}}^2du\big]ds.
\nonumber
\\
 \end{eqnarray}
Choosing $\rho=\frac{1}{3C}$, we get
\begin{eqnarray*}\label{mfe:22}
&& \int_t^TE \|Z_{s}^{n+1}-Z_{s}^{n}\|_{X_{s-}}^2 ds
+ke^{-kt}\int_t^Te^{ks} \int_s^T E
\|Z_{u}^{n+1}-Z_{u}^{n}\|_{X_{u-}}^2 du ds
\\&\leq&
 \frac{1}{2}
 \Big[\int_t^T E \|Z_s^{n}- Z_s^{n-1}\|_{X_{s-}}^2ds
+ke^{-kt}\int_t^Te^{ks} \int_s^T E
\|Z_{u}^{n}-Z_{u}^{n-1}\|_{X_{u-}}^2 du ds\Big].
 \end{eqnarray*}
Iterating above inequality implies that $\{Z^n\}$ is a Cauchy
sequence in $ H^{2}_{X,\mathbb{F}}( R^{N})$ under the  equivalent
norm.

By \eqref{mfe:21}, we know that  $\{Y^n\}$ is a Cauchy sequence in $
H^{2}_{\mathbb{F}}(R)$. We denote their limits by $Y$ and $Z$
respectively. By (A2) and  the dominated convergence theorem,   for
any $t\in[0, T]$, we have
\begin{eqnarray*}
E\int_t^T |E'[f(s, Y_{s-}^{n+1 \,\prime},Z_{s}^{n \,\prime},
Y_{s-}^{n+1}, Z_s^{n+1})-f(s, Y_{s-}^{\,\prime},Z_{s}^{\,\prime},
Y_{s-}, Z_s)]|ds\rightarrow 0,\quad n \rightarrow \infty.
 \end{eqnarray*}
We now pass to the limit on both sides of \eqref{mfe:18}, it follows
that  $(Y, Z)$ is the unique solution of mean-filed BSDE
\eqref{mfsdem:2}.
\end{proof}

\section{A comparison theorem}

In this section, we discuss a comparison theorem for the solutions
of one-dimensional mean-field BSDEs on Markov chains.

 Let $(Y^1, Z^1)$ and
$(Y^2, Z^2)$ be respectively the solutions for the following two
mean-field BSDEs
\begin{eqnarray} \label{absdem:22}
 Y^i_t=\xi^i +\int_t^TE'[f_i(s, Y^{i\prime}_s,  Y^i_s, Z^{i\prime}_s, Z^i_s)]ds
-\int_t^TZ^i_sdM_s,
\end{eqnarray}
where $i=1,2$.

\begin{theorem}\label{theorem:6}
Assume that $f_1, f_2$ satisfy {\rm(A1)} and {\rm(A2)}, $\xi^1
,\xi^2 \in L^{2}(\Omega, \mathcal{F}_T, P)$. Moreover, we suppose:

(i) $\xi^1 \geq\xi^2$,  $P$-a.s.;

(ii) for any $t\in[0, T]$,  $f_1(\omega',\omega, t,Y^{2\prime}_t,
Z^{2\prime}_t, Y^2_t, Z^2_t)\geq f_2(\omega',\omega,
t,Y^{2\prime}_t, Z^{2\prime}_t, Y^2_t, Z^2_t)$, $\bar{P}$-a.s.;
 then   $Y^1_t\geq Y^2_t$ for all $t\in[0, T]$
componentwise.

It is then rue that $Y^1 \geq Y^2$ on $[0,T]$, $P$-a.s.
\end{theorem}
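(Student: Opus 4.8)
The plan is to linearize the difference of the two equations and then reduce, via the Picard schemes of Section 3, to a family of ordinary (non--mean-field) BSDEs on the chain, to which the scalar comparison theorem of Cohen and Elliott \cite{Cohen2010a} can be applied. First I would set $\hat Y = Y^1 - Y^2$, $\hat Z = Z^1 - Z^2$, $\hat\xi = \xi^1 - \xi^2 \ge 0$, and write out the BSDE solved by $(\hat Y,\hat Z)$. I would split the driver difference as
$$
f_1(s,Y^{1\prime}_s,Y^1_s,Z^{1\prime}_s,Z^1_s)-f_2(s,Y^{2\prime}_s,Y^2_s,Z^{2\prime}_s,Z^2_s)
=\big[f_1(s,Y^{1\prime}_s,\dots)-f_1(s,Y^{2\prime}_s,\dots)\big]+g_s,
$$
where $g_s:=f_1(s,Y^{2\prime}_s,Y^2_s,Z^{2\prime}_s,Z^2_s)-f_2(s,Y^{2\prime}_s,Y^2_s,Z^{2\prime}_s,Z^2_s)\ge 0$ by hypothesis (ii). Using (A1) I would linearize the first bracket, writing it as $\alpha'_s\hat Y'_s+\beta_s\hat Y_s+\langle\gamma'_s,\hat Z'_s\rangle+\langle\delta_s,\hat Z_s\rangle$ with $\bar{\mathbb F}$-measurable difference-quotient coefficients bounded by $C$, where $\langle\cdot,\cdot\rangle$ denotes the pairing associated with the seminorm $\|\cdot\|_{X_{s-}}$. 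This presents $(\hat Y,\hat Z)$ as the solution of a linear mean-field BSDE with nonnegative terminal value $\hat\xi$ and nonnegative source $E'[g_s]$.

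The core step is to show that such a linear mean-field BSDE has a nonnegative $Y$-component. Imitating Lemma \ref{lemma:3} and Theorem \ref{theorem:2}, I would set $\hat Y^0\equiv 0$ and define $(\hat Y^{n+1},\hat Z^{n+1})$ by freezing the primed (mean-field) arguments at the previous iterate, so that each iterate solves an ordinary BSDE on the chain,
$$
\hat Y^{n+1}_t=\hat\xi+\int_t^T\big(\beta_s\hat Y^{n+1}_s+\langle\delta_s,\hat Z^{n+1}_s\rangle+G^n_s\big)\,ds-\int_t^T\hat Z^{n+1}_s\,dM_s,
$$
with frozen source $G^n_s=E'[\alpha'_s\hat Y^{n\prime}_s+\langle\gamma'_s,\hat Z^{n\prime}_s\rangle]+E'[g_s]$. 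The contraction estimates of Section 3 give $(\hat Y^n,\hat Z^n)\to(\hat Y,\hat Z)$ in $S^2_{\mathbb F}(R)\times H^2_{X,\mathbb F}(R^N)$, so it is enough to prove $\hat Y^n\ge 0$ for every $n$ and pass to the limit.

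I would run the induction as follows: assuming $\hat Y^n\ge 0$, I would compare the BSDE for $\hat Y^{n+1}$ with the trivial BSDE having terminal value $0$ and the same linear driver; the Cohen--Elliott comparison theorem \cite{Cohen2010a} then yields $\hat Y^{n+1}\ge 0$ as soon as the frozen source $G^n_s$ is nonnegative. Here $E'[g_s]\ge 0$, and $\hat Y^{n\prime}_s\ge 0$ gives $E'[\alpha'_s\hat Y^{n\prime}_s]\ge 0$ \emph{provided} $\alpha'_s\ge 0$, i.e. provided $f$ is nondecreasing in the $y'$-variable---a monotonicity hypothesis that, as in the diffusion mean-field theory, appears to be needed here and seems to be what the displayed statement intends.

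The main obstacle is the remaining term $E'[\langle\gamma'_s,\hat Z^{n\prime}_s\rangle]$, the mean-field contribution of $Z'$, which carries no a priori sign. This is precisely where the mean-field coupling clashes with the jump structure of the chain: in contrast to the Brownian case, a jump of $Y$ across zero is not controlled by the Lipschitz bound alone, which is exactly why the Cohen--Elliott comparison theorem is subject to a structural condition on the $z$-dependence of the driver. I would therefore expect the argument to close only under an additional hypothesis neutralizing this term---for instance that $f$ does not depend on $z'$, or that its $z'$-dependence satisfies the relevant Cohen--Elliott-type sign condition---after which $G^n\ge 0$ and the induction, together with the limit passage, completes the proof. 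Verifying the Cohen--Elliott condition for the frozen linear driver at each step is then routine.
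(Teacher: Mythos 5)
You take a genuinely different route from the paper, and your caution is well placed. The paper does not linearize, does not iterate, and never invokes the Cohen--Elliott comparison theorem: it runs a single direct energy estimate on the positive part, applying the Stieltjes chain rule to $((Y_t^2-Y_t^1)^+)^2$, using hypothesis (ii) to replace $f_2$ by $f_1$ at the point $(Y^{2\prime},Z^{2\prime},Y^2,Z^2)$, then (A1), Young's inequality and Gronwall's lemma to conclude $E\big[((Y_t^2-Y_t^1)^+)^2\big]=0$. Your scheme (freeze the primed arguments, compare each iterate with zero via the scalar comparison theorem of \cite{Cohen2010a}, pass to the limit) is closer in spirit to the Picard construction of Lemma \ref{lemma:3} and Theorem \ref{theorem:2}, and would, where it closes, yield the conclusion; the paper's argument is shorter, but as explained next it does not in fact close either.

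The two obstructions you isolate are exactly the two places where the paper's estimate is invalid. First, the mean-field $y'$-term: in the paper's chain of inequalities the quantity $2CE\int_t^T I_{\{Y_s^2>Y_s^1\}}(Y_s^2-Y_s^1)\,E'|Y^{2\prime}_s-Y^{1\prime}_s|\,ds$ is bounded by the same expression with $E'|Y^{2\prime}_s-Y^{1\prime}_s|$ replaced by $E\big[I_{\{Y_s^2>Y_s^1\}}|Y_s^2-Y_s^1|\big]$. That inequality goes the wrong way: $E|Y_s^2-Y_s^1|=E[(Y_s^2-Y_s^1)^+]+E[(Y_s^1-Y_s^2)^+]$, and the second summand is precisely what a comparison theorem cannot assume to vanish --- this is why the diffusion mean-field comparison theorem of Buckdahn, Li and Peng \cite{Buckdahn2} requires $f$ nondecreasing in $y'$, the monotonicity you flagged. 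Second, the mean-field $z'$-term: after Young's inequality the contribution of $E'\|Z^{2\prime}_s-Z^{1\prime}_s\|_{X_{s-}}$ is silently absorbed into $2\rho CE\int_t^T I_{\{Y_s^2>Y_s^1\}}\|Z_s^2-Z_s^1\|^2_{X_{s-}}ds$, i.e.\ the indicator is slipped inside; but $E'\|Z^{2\prime}_s-Z^{1\prime}_s\|_{X_{s-}}$ involves $\hat Z_s$ over all of $\Omega$, whereas the left-hand side of the estimate controls $\|\hat Z_s\|_{X_{s-}}$ only on the set $\{Y_s^2>Y_s^1\}$ (and with $\rho=\frac{1}{2C}$ the coefficient $2\rho C=1$ leaves no margin), so this term is simply not controlled and Gronwall cannot be applied. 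In \cite{Buckdahn2} comparison is established only for drivers independent of $z'$, matching your diagnosis. So your proposal does not prove the theorem as stated, but that is because the statement (bare Lipschitz dependence on $y',z'$, no monotonicity, no structural condition on the $z$- and $z'$-dependence) outruns the available arguments: the paper's own proof commits exactly the two errors you predicted, while under your added hypotheses your iteration scheme is a sound way to obtain a corrected version of the result.
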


\begin{proof}We omit the $\omega', \omega$ and $s$ for clarity.
By assumption (i), $(\xi^2-\xi^1)^+=0$, a.s.. Since for $t\in[0,
T]$, $(Y_t^2- Y^1_t)^+=\frac{1}{2}[|Y_t^2- Y^1_t|+(Y_t^2- Y^1_t)]$,
then by the Stieltjes chain rule for products, we have
\begin{eqnarray*}\label{mfe:10}
 &&((Y_t^2- Y_t^1)^+)^2
\\&=&-2\int_t^T(Y_s^2- Y_s^1)^+d(Y_s^2- Y_s^1)^+-\sum_{t< s\leq T}\Delta(Y_s^2- Y_s^1)^+
\Delta(Y_s^2- Y_s^1)^+
\\&=&-\int_t^T(Y_s^2- Y_s^1)^+d[|Y_s^2- Y^1_s|+(Y_s^2- Y^1_s)]-\sum_{t< s\leq T}\Delta(Y_s^2- Y_s^1)^+
\Delta(Y_s^2- Y_s^1)^+
\\&=&-\int_t^T(Y_s^2- Y_s^1)^+d|Y_s^2- Y^1_s| -\int_t^T(Y_s^2- Y_s^1)^+d(Y_s^2- Y^1_s)-\sum_{t< s\leq T}\Delta(Y_s^2- Y_s^1)^+
\Delta(Y_s^2- Y_s^1)^+
\\&=&-2\int_t^TI_{\{Y_s^2>Y_s^1\}}(Y_s^2- Y_s^1)d(Y_s^2- Y^1_s)
-\sum_{t<s\leq T}I_{\{Y_s^2>Y_s^1\}}\Delta(Y_s^2- Y_s^1)
\Delta(Y_s^2- Y_s^1)
\\&=&-2c\int_t^TI_{\{Y_s^2>Y_s^1\}}(Y_s^2- Y_s^1)d(Y_s^2- Y^1_s)
-\sum_{t<s\leq T}I_{\{Y_s^2>Y_s^1\}}|(Z_s^2- Z_s^1)\Delta M_s|^2.
 \end{eqnarray*}
For $t\in[0,T]$, by assumption (ii), (A1) and Young's inequality,
for any $\rho>0$, we have
\begin{eqnarray*}\label{mfe:11}
 &&E((Y_t^2- Y_t^1)^+)^2+E\int_t^TI_{\{Y_s^2>Y_s^1\}}\|(Z_s^2- Z_s^1)\|_{X_{s-}}^2ds
\\&=&2E\int_t^TI_{\{Y_s^2>Y_s^1\}}(Y_s^2- Y_s^1) E'[f_2(Y^{2\prime}_s,  Z^{2\prime}_s, Y^2_s, Z^2_s)
-f_1(Y^{1\prime}_s, Z^{1\prime}_s,  Y^1_s, Z^1_s)] ds
\\&\leq&2E\int_t^TI_{\{Y_s^2>Y_s^1\}}(Y_s^2- Y_s^1) E'[f_1(Y^{2\prime}_s,  Z^{2\prime}_s, Y^2_s, Z^2_s)
-f_1(Y^{2\prime}_s, Z^{2\prime}_s,  Y^2_s, Z^2_s)] ds
\\&\leq&2CE\int_t^TI_{\{Y_s^2>Y_s^1\}}(Y_s^2- Y_s^1)[|Y_s^2- Y_s^1|+\|(Z_s^2- Z_s^1)\|_{X_{s-}}+
E'|Y^{2\prime}_s - Y^{1\prime}_s|+E'\|(Z^{2\prime}_s-
Z^{1\prime}_s)\|_{X_{s-}}]ds
\\&\leq&2C\int_t^TE ((Y_s^2- Y_s^1)^+)^2ds
+2CE\int_t^TI_{\{Y_s^2>Y_s^1\}}(Y_s^2-
Y_s^1)E[I_{\{Y_s^2>Y_s^1\}}|Y_s^2- Y_s^1|]ds\\&&
+\frac{2C}{\rho}\int_t^TE ((Y_s^2- Y_s^1)^+)^2ds +2\rho
CE\int_t^TI_{\{Y_s^2>Y_s^1\}}\|(Z_s^2- Z_s^1)\|_{X_{s-}}^2ds
\\&\leq&(4C+\frac{2C}{\rho})\int_t^TE ((Y_s^2- Y_s^1)^+)^2ds +2\rho
CE\int_t^TI_{\{Y_s^2>Y_s^1\}}\|(Z_s^2- Z_s^1)\|_{X_{s-}}^2ds.
 \end{eqnarray*}
 Choosing $\rho=\frac{1}{2C}$, it follows from Gronwall's inequality
 that
 $E((Y_t^2- Y_t^1)^+)^2=0, t\in[0,T]$. It is then rue that $Y^1 \geq Y^2$ on $[0,T]$, $P$-a.s.
 The proof is complete.
\end{proof}

\begin{remark}
Compare to the comparison results in Cohen and Elliott
\cite{Cohen2010a}, our assumptions on coefficients $f_1$ and $f_2$
are natural. Moreover, we don't make restrictions on the two
solutions, hence it's easier to use.
\end{remark}


\label{lastpage-01}
\end{document}